\newtheorem{Theorem}[equation]{Theorem}
\newtheorem{Proposition}[equation]{Proposition}
\newtheorem{Lemma}[equation]{Lemma}
\newtheorem{Definition}[equation]{Definition}
\def\ef{\mathbb{F}}
\def\ov{\overline}
\def\sm{\setminus}
\begin{document}

\vspace*{5mm}

\noindent\textbf{\LARGE A combinatorial construction of an
$M_{12}$-invariant code
\footnote{This research was supported in part by Ministry for Education, University and Research of Italy (MIUR) and by the Italian National Group for Algebraic and Geometric Structures and their Applications (GNSAGA - INDAM).
}}

\flushbottom
\date{}

\vspace*{5mm} \noindent

\textsc{J\"urgen Bierbrauer} \\
\texttt{jbierbra@mtu.edu} \\
{\small Department of Mathematical Sciences Michigan Technological University Houghton, Michigan 49931 (USA)}\\

\textsc{Stefano Marcugini, Fernanda Pambianco} \\
\texttt{\{stefano.marcugini, fernanda.pambianco\}@unipg.it} \\
{\small Dipartimento di Matematica e Informatica,
Universit\`{a} degli Studi di Perugia, Via Vanvitelli~1,
Perugia, 06123, Italy}\\

\medskip

\begin{center}
\parbox{11,8cm}{\footnotesize
\textbf{Abstract.} In this work we summarized some recent results to be included in a forthcoming paper \cite{BMP}. A ternary $[66,10,36]_3$-code admitting the Mathieu group $M_{12}$ as a group
of automorphisms has recently been constructed by N. Pace, see \cite{Pace}.
We give a construction of the Pace code in terms of $M_{12}$ as well as a
combinatorial description in terms of the small Witt design, the Steiner system
$S(5,6,12).$ We also present a proof that the Pace code does indeed have
minimum distance $36.$}
\end{center}

\baselineskip=0.9\normalbaselineskip


\section{Introduction}
\label{introsection}
A large number of important mathematical objects are related to the Mathieu groups.
It came as a surprise when N. Pace found yet another such exceptional object, a
$[66,10,36]_3$-code whose group of automorphisms is $Z_2\times M_{12}$
(see \cite{Pace}). We present here two constructions for this code,
an algebraic construction which starts from the group $M_{12}$ in its natural action
as a group of permutations on $12$ letters, and
a combinatorial construction in terms of the Witt design $S(5,6,12).$
We also prove that the code has parameters as claimed.
In the next section we start by recalling some of the basic properties of
$M_{12}$ and the small Witt design $S(5,6,12).$

\section{The ternary Golay code, $M_{12}$ and $S(5,6,12)$}
\label{M12section}

 The Mathieu group $M_{12}$ is sharply 5-transitive on $12$ letters and therefore has order $12\times 11\times 10\times 9\times 8.$
It is best understood in terms of the ternary Golay code $[12,6,6]_3.$ The ternary Golay code has a generator matrix
$(I\vert P)$ where
$I$ is the $(6,6)$-unit matrix and $$P=\left(\begin{array}{cccccc}
0 &    1  &   1  & 1  & 1  &  1 \\
1 &    0  &   1  & 1  & 2  &  2 \\
1 &    1  &   0  & 2  & 1  &  2 \\
1 &    1  &   2  & 0  & 2  &  1 \\
1 &    2  &   1  & 2  & 0  &  1 \\
1 &    2  &   2  & 1  & 1  &  0
\end{array}\right).$$

The group $M_{12}$ acts in terms of monomial operations on the ternary Golay code.
Here we identify the $12$ letters with the columns of the generator matrix and consider the
action of $M_{12}$ as a group of permutations on those $12$ letters $\lbrace 1,2,\dots ,12\rbrace .$
It is generated by $h_1,h_2,h_3,h_4$ and $g$ where
$$h_1=(2,3,5,6,4)(8,9,11,12,10), h_2=(2,3)(4,5)(8,9)(10,11),$$
$$ h_3=(3,5,4,6)(9,11,10,12), h_4=(1,2)(5,6)(7,8)(11,12),$$
$$ g=(5,12)(6,11)(7,8)(9,10).$$
The group $H=\langle h_1,h_2,h_3,h_4\rangle$ of order $120$ is the stabilizer of $\lbrace 1,2,3,4,5,6\rbrace .$
  Call a 6-set an {\bf information set} if the
corresponding submatrix is invertible, call it a {\bf block} if the submatrix has rank $5.$
The terminology derives from the fact that the blocks define a Steiner system $S(5,6,12),$
the small Witt design. There are 132 blocks and $12\times 11\times 6$ information sets.
The complement of a block is a block as well.
The stabilizer of each 5-set is $S_5,$ the stabilizer of a block has order $10\times 9\times 8=720=6!$ and the
stabilizer of an information set has order $5!$ The stabilizer of a 2-set has order $1440.$ This stabilizer is the
group $P\Gamma L(2,9)\cong Aut(A_6).$
In the sequel we identify the $12$ letters with a basis $\lbrace v_1,\dots ,v_{12}\rbrace$ of a vector space
$V=V(12,3)$ over the field with three elements and consider the corresponding action of $M_{12}$
on $V.$

\section{The 10-dimensional module of $M_{12}$}
\label{10dimdef}

Clearly $M_{12}$ acts on an 11-dimensional submodule of $V,$ the
{\bf augmentation ideal} $I=\lbrace\sum_{i=1}^{12} a_iv_i\vert\sum a_i=0\rbrace $
and on a 1-dimensional submodule generated by the {\bf diagonal} $\Delta =v_1+\dots +v_{12}.$
As we are in characteristic 3, we have $\Delta\in I,$ and $M_{12}$ acts on the 10-dimensional factor space
$Z=I/\langle\Delta\rangle .$
The $u_i=v_i-v_{12}, i\leq 11$ are a basis of $I$ and $z_i=\ov{u_i}=u_i+\Delta\ef_3, i\leq 10$ are a basis of $Z.$
Here $\sum_{i=1}^{11} u_i=\Delta ,$ hence $z_{11}=-z_1-\dots -z_{10}.$

\section{The Pace code}
\label{Pacealgdef}

We consider the action of $M_{12}$ on the 10-dimensional $\ef_3$-vector space $Z$ with its basis
$z_i=\ov{u_i}=v_i-v_{12}+\Delta\ef_3, i=1,\dots ,10.$ Recall that it is induced by the
permutation representation on $\lbrace v_1,\dots ,v_{12}\rbrace .$
This action defines embeddings of  $M_{12}$ in $GL(10,3)$ and in $PGL(10,3).$
For each orbit of $M_{12}$ we consider the projective ternary code whose generator matrix has as columns
representatives of the projective points constituting the orbit.

\begin{Definition}
\label{Paceblockdef}
Let $X\subset\lbrace 1,2,\dots ,12\rbrace , \vert X\vert =6.$\\
Define $v_X=\sum_{i\in X}v_i, z_X=\ov{v_X}.$
\end{Definition}

It is in fact clear that $v_X\in I,$ and $z_X\in Z$ is therefore defined.

\begin{Proposition}
\label{Paceorbitprop}
The $z_X\in Z$ where $X$ varies over the blocks of $S(5,6,12)$
form an orbit of length $132$ in $Z.$
In the action on projective points (in $PG(9,3)$), this yields an orbit
of length $66.$
\end{Proposition}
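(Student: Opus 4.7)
The plan is to exploit the fact that the map $X\mapsto v_X$ from $6$-subsets of $\{1,\dots,12\}$ into $V$ is $M_{12}$-equivariant, and hence so is $X\mapsto z_X$ into $Z$. Since $M_{12}$ acts transitively on the $132$ blocks of $S(5,6,12)$ (the block stabilizer has order $720$, giving index $95040/720=132$), the vectors $z_X$ all lie in a single $M_{12}$-orbit of $Z$. What remains for the first claim is to show the assignment $X\mapsto z_X$ is \emph{injective}.

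For injectivity I would argue as follows. Suppose $z_X=z_Y$ for two blocks $X,Y$; then $v_X-v_Y\in\langle\Delta\rangle$, so $v_X-v_Y=c\Delta$ for some $c\in\ef_3$. Reading coefficients on the basis $v_1,\dots,v_{12}$, the left-hand side is $1$ on $X\sm Y$, $-1$ on $Y\sm X$, and $0$ on $(X\cap Y)\cup(\overline{X\cup Y})$. If $X\ne Y$, then both $X\sm Y$ and $Y\sm X$ are non-empty (they have the same cardinality), which forces $c=1$ and $c=-1$ simultaneously, a contradiction. Hence $X=Y$, and the $M_{12}$-orbit has length exactly $132$.

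For the projective statement the key observation is that complementation $X\mapsto\ov{X}=\{1,\dots,12\}\sm X$ sends blocks to blocks (stated in Section 2), and
$$v_X+v_{\ov{X}}=\Delta,\qquad\text{so}\qquad z_{\ov{X}}=-z_X\text{ in }Z.$$
Moreover $z_X\ne 0$: otherwise $v_X=c\Delta$, but $v_X$ has six $1$-coordinates and six $0$-coordinates, so it is not a scalar multiple of $\Delta$. Therefore each pair $\{X,\ov{X}\}$ gives a projective point $[z_X]=[z_{\ov{X}}]\in PG(9,3)$, and by the injectivity of step one these $66$ pairs give $66$ \emph{distinct} projective points. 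Transitivity of $M_{12}$ on pairs $\{X,\ov{X}\}$ (inherited from transitivity on blocks, since complementation is $M_{12}$-equivariant) then yields a single projective orbit of length $66$.

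The only delicate point is the injectivity argument, and the main obstacle to watch for is whether two distinct blocks could produce proportional rather than merely equal images; this is handled in the same case analysis by replacing $z_X=z_Y$ by $z_X=\lambda z_Y$ with $\lambda\in\ef_3^\times$: for $\lambda=-1$ one is forced into the complementary pair $Y=\ov{X}$, while $\lambda=1$ gives $X=Y$ as above. This simultaneously confirms that the only proportionality among the $z_X$ is the one induced by complementation, which is exactly what is needed to obtain $132/2=66$ projective points.
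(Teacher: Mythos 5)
Your proof is correct and follows essentially the same route as the paper: equivariance of $X\mapsto z_X$, transitivity of $M_{12}$ on the $132$ blocks, and the relation $z_{\ov{X}}=-z_X$ coming from $v_X+v_{\ov{X}}=\Delta$. The only difference is that you explicitly verify injectivity of $X\mapsto z_X$ (and that the only proportionalities are the complementary pairs), a detail the paper's proof leaves implicit; this is a welcome addition but not a different argument.
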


\begin{proof}
Clearly $M_{12}$ permutes the $z_X$ in the same way as it permutes the
blocks $X.$ This yields an orbit of length $132$ in $Z=V(10,3).$
If $\ov{X}$ is the complement of $X,$ then $v_{\ov{X}}+v_X=\Delta ,$ hence
$z_{\ov{X}}=-z_X.$
It follows that  $M_{12}$ acts transitively on the $66$ points in $PG(9,3)$ generated by the $z_X$
(block $X$ and its complement generating the same projective point).
\end{proof}

\begin{Definition}
\label{PaceM12def}
Let $C$ be the $[66,10]_3$-code whose generator matrix has as columns
representatives of the orbit of $M_{12}$ on the $z_X$ where $X$ is a block.
\end{Definition}

This is one way of representing the Pace code. Observe that each complementary pair of
blocks contributes one column of the generator matrix. We may use as representatives the
vectors $z_X$ where $X$ varies over the $66$
blocks $X$ not containing the letter $12.$
As the stabilizer of a block in $M_{12}$ is $S_6$ it follows that the stabilizer of a point in the orbit
equals the stabilizer of a complementary pair of blocks and is twice as large as $S_6.$
The stabilizer is $P\Gamma L(2,9),$ of order $2\times 6!$

\section{A combinatorial description}
\label{combdescsection}

We introduce some notation.

\begin{Definition}
\label{combdescdef}
Let ${\cal B}$ be a family of subsets (blocks) of a $v$-element set $\Omega .$
Let $A,B\subset\Omega$ be disjoint subsets, $\vert A\vert =a, \vert B\vert =b.$
Define a matrix $G$ with $k=v-a-b$ rows and $n$ columns where $n$ is the number of blocks
disjoint from $A.$
Here we identify the rows of $G$ with the points $i\in\Omega\sm (A\cup B)$ and the
columns with the blocks $X$ disjoint from $A.$
The entry in row $i$ and column $X$ is $=1$ if $i\in X,$ it is $=0$ otherwise.
As the entries of $G$ are $0,1$ we can consider them as
elements of an arbitrary finite field $K.$
Define ${\cal C}=C_{A,B}({\cal B},K)$ to be the code generated by $G$ over $K.$
\end{Definition}

In words: the column of $G$ indexed by $X\in {\cal B}$ is the characteristic function of the
set $X\sm B.$
We write $C_{a,b}({\cal B},K)$ instead if the choice of the subsets $A,B$ does not matter.
This is the case in particular if the automorphism group of ${\cal B}$ is $(a+b)$-transitive.
Code ${\cal C}$ is a $K$-linear code of length $n.$ Its designed dimension is $k$
but the true dimension may be smaller.
We have no clue what the minimum distance is.
Observe that $C_{A,B}({\cal B},K)$ is a subcode of $C_{A,\emptyset }({\cal B},K):$
a generator matrix of the smaller code arises from the generator matrix of the larger code by
omitting some $\vert B\vert$ rows.

\begin{Proposition}
\label{PAXidentprop}
The Pace code from Definition \ref{PaceM12def} is monomially equivalent to
$C_{1,1}(S(5,6,12),\ef_3).$
\end{Proposition}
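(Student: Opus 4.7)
The plan is to choose explicit data on both sides and then identify the two generator matrices by a single invertible row operation. First I take $A=\{12\}$ and $B=\{11\}$ in Definition \ref{combdescdef}; this is legitimate since, as noted just after that definition, the $2$-transitivity of $M_{12}$ makes the monomial class of $C_{1,1}(S(5,6,12),\ef_3)$ independent of this choice. With this choice the rows of $G_{comb}$ are indexed by $\{1,\dots,10\}$ and its columns by the $66$ blocks $X$ with $12\notin X$. On the Pace side I take as representatives of the projective orbit the $z_X$ for the same $66$ blocks, as suggested right after Definition \ref{PaceM12def}.

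Next I expand $z_X$ in the basis $z_1,\dots,z_{10}$ of $Z$. Since $12\notin X$ and $|X|=6\equiv 0\pmod 3$, the vector $v_X=\sum_{i\in X}v_i=\sum_{i\in X}u_i$ already lies in $I$, and then the relation $z_{11}=-z_1-\cdots-z_{10}$ yields
\[
z_X \;=\; \sum_{i=1}^{10}\bigl(\chi_X(i)-\chi_X(11)\bigr)\,z_i,
\]
where $\chi_X$ denotes the characteristic function of $X$. Consequently the Pace generator matrix satisfies $G_{Pace}[i,X]=G_{comb}[i,X]-\chi_X(11)$, so $G_{Pace}$ is obtained from $G_{comb}$ by subtracting one and the same row vector $r:=(\chi_X(11))_X$ from each of the ten rows.

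To finish I need only verify that $r$ lies in the row span of $G_{comb}$, so that the ``subtract $r$ from every row'' step is an invertible row operation. Summing the ten rows of $G_{comb}$ at column $X$ gives $|X\cap\{1,\dots,10\}|=6-\chi_X(11)\equiv -\chi_X(11)\pmod 3$, hence $r\equiv -\sum_{i=1}^{10}G_{comb}[i,\cdot]$ in $\ef_3$. Thus $G_{Pace}$ and $G_{comb}$ have the same row space; the two codes coincide for this particular choice of representatives, and are in particular monomially equivalent. The only delicate point is the mod-$3$ arithmetic: the coincidences $6\equiv 0\pmod 3$ and $\sum_{j=1}^{10}\chi_X(j)=6-\chi_X(11)$ (for $X$ avoiding $12$) are what force $v_X\in I$ and create the dependence that turns the comparison into an invertible row operation. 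Once these observations are isolated, the argument is a short direct computation.
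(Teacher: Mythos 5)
Your argument is correct in substance, but it takes a genuinely different route from the paper's, and one step is justified by a principle that is false in general (though true in your case). The paper works column by column: for a block $X$ avoiding both $11$ and $12$ the Pace column is already the characteristic function of $X$; for $11\in X$ it negates the column (a monomial operation) and recognizes the result as the characteristic function of $\ov{X}\sm\lbrace 12\rbrace$, thereby arriving at the generator matrix of $C_{A,B}$ with $A=\lbrace 11\rbrace$, $B=\lbrace 12\rbrace$. You instead keep every column fixed and compare the two matrices by a single global row operation, concluding that the codes are literally equal for your choice of representatives; the cost is that you must invoke the paper's $2$-transitivity remark to justify taking $A=\lbrace 12\rbrace$, $B=\lbrace 11\rbrace$, which is legitimate. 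Your computation $z_X=\sum_{i=1}^{10}(\chi_X(i)-\chi_X(11))z_i$ and the identity $\sum_{i=1}^{10}\chi_X(i)=6-\chi_X(11)\equiv-\chi_X(11)\pmod 3$ are both correct.

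The slip is the claim that because $r=(\chi_X(11))_X$ lies in the row span of $G_{comb}$, subtracting $r$ from every row is an invertible operation. That implication fails in general: if $r=\sum_j c_j\rho_j$ in terms of the rows $\rho_j$, the transformation is $I-\mathbf{1}c^{T}$, whose determinant is $1-\sum_j c_j$; membership of $r$ in the row span only yields the inclusion of the new row space in the old one, and the operation is singular precisely when $\sum_j c_j=1$. In your situation $c_j=-1$ for all $j$, so the transformation is $I+J$ ($J$ the all-ones matrix) with determinant $1+10=2\neq 0$ in $\ef_3$, and the step is indeed invertible. Alternatively, you can avoid determinants entirely by checking the reverse inclusion directly: the column sum of $G_{Pace}$ at $X$ is $6-11\chi_X(11)\equiv\chi_X(11)\pmod 3$, so $r$ is also the sum of the rows of $G_{Pace}$, giving equality of the two row spaces. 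Either one-line addition closes the gap.
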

\begin{proof}
The generator matrix of Definition \ref{PaceM12def} has rows indexed by
$i\in\lbrace 1,\dots ,10\rbrace$ and columns indexed by blocks $X$ of $S(5,6,12)$
not containing the letter $12.$ If also $11\notin X,$ then the corresponding column is
the characteristic function of $X.$ Let $11\in X.$ As $z_{11}=-z_1-\dots -z_{10}$ the
entries in this column are $=0$ if $i\in X, =2$ if $i\notin X.$ Taking the negative of this
column, we obtain the characteristic function of $\ov{X}\sm\lbrace 12\rbrace .$
We arrive at the generator matrix of $C_{A,B}(S(5,6,12),\ef_3)$
where $A=\lbrace 11\rbrace , B=\lbrace 12\rbrace .$
\end{proof}

\section{Combinatorial properties of the small Witt design}
\label{comblemmasection}

The following elementary properties of the Steiner system $S(5,6,12)$ will be used in the sequel.

\begin{Lemma}
\label{iablemma}
Let $\Omega =\lbrace 1,2,\dots ,12\rbrace$ and $A,B\subset\Omega , \vert A\vert =a, \vert B\vert =b$
and such that $A\cap B=\emptyset ,a+b\leq 5.$ Let $i(a,b)$ be the number of blocks which
contain $A$ and are disjoint from $B.$
Then $i(b,a)=i(a,b)$ and
$$i(5,0)=1, i(4,0)=4, i(3,0)=12, i(2,0)=30, i(1,0)=66,$$
$$i(1,1)=36, i(2,1)=18, i(3,1)=8, i(4,1)=3, i(2,2)=10, i(3,2)=5.$$
\end{Lemma}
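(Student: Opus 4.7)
The plan is to reduce everything to the two standard facts about $S(5,6,12)$: (i) it is a $5$-design with $\lambda=1$, so the number of blocks through any $t$-subset ($t\le 5$) is a fixed constant $\lambda_t$; and (ii) the complement of a block is a block.

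First I would compute the numbers $\lambda_t = i(t,0)$ for $t = 5,4,3,2,1$ using the standard double-counting recursion for $t$-designs. Fix an $i$-subset $S$ with $i<5$, and count pairs $(T,X)$ where $T$ is an $(i+1)$-subset with $S\subset T$ and $X$ is a block containing $T$. Counting by $T$ gives $(12-i)\lambda_{i+1}$; counting by $X$ gives $(6-i)\lambda_i$. Starting from $\lambda_5=1$ (the defining property of $S(5,6,12)$) and running the recursion $\lambda_i=\lambda_{i+1}(12-i)/(6-i)$ downward yields $\lambda_4=4,\lambda_3=12,\lambda_2=30,\lambda_1=66$, which are exactly the values of $i(4,0),i(3,0),i(2,0),i(1,0)$ in the statement.

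Next I would obtain the values $i(a,b)$ with $b\ge 1$ by inclusion-exclusion on the set $B$. For disjoint $A,B$ with $|A|=a$, $|B|=b$,
$$i(a,b) \;=\; \sum_{T\subseteq B}(-1)^{|T|}\,\#\{\text{blocks }X:X\supseteq A\cup T\}\;=\;\sum_{j=0}^{b}(-1)^j\binom{b}{j}\lambda_{a+j},$$
because $A\cup T$ is an $(a+j)$-subset for $|T|=j\le 5-a$, so the inner count is $\lambda_{a+j}$. Substituting the values from step~1 gives, e.g., $i(1,1)=66-30=36$, $i(2,1)=30-12=18$, $i(3,1)=12-4=8$, $i(4,1)=4-1=3$, $i(2,2)=30-2\cdot 12+4=10$, and $i(3,2)=12-2\cdot 4+1=5$, matching the list.

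Finally, the symmetry $i(a,b)=i(b,a)$ is immediate from complementation: since $|X|=6=12/2$, the map $X\mapsto\Omega\setminus X$ is an involution on the block set, and $X\supseteq A$, $X\cap B=\emptyset$ if and only if $\Omega\setminus X\supseteq B$ and $(\Omega\setminus X)\cap A=\emptyset$. No step presents a real obstacle; the only thing to be careful about is keeping $a+b\le 5$ so that every $(a+j)$-subset appearing in the inclusion-exclusion sum has size at most $5$ and the values $\lambda_{a+j}$ from step~1 are legitimately available.
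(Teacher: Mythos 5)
Your proposal is correct and matches the paper's approach: the paper likewise derives $i(5,0)=1$ from the Steiner-system definition, gets the symmetry $i(a,b)=i(b,a)$ from complementation of blocks, and dismisses the remaining values as ``obvious counting arguments,'' which your recursion for the $\lambda_t$ and the inclusion--exclusion over $B$ simply make explicit. All the computed values check out, and your remark that $a+b\leq 5$ is needed to keep every $\lambda_{a+j}$ well defined is the right point of care.
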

\begin{proof}
$i(5,0)=1$ is the definition of a Steiner $5$-design, $i(b,a)=i(a,b)$ follows from the fact that
the complements of blocks are blocks. The rest follows from obvious counting arguments.
\end{proof}

The following combinatorial lemmas may be verified by direct calculations using coordinates.

\begin{Lemma}
\label{specialcountthirdlemma}
A family of five 3-subsets of a 6-set contains at least two 3-subsets which meet in 2 points.

\end{Lemma}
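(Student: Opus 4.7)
The plan is a short double-counting argument combined with a convexity estimate. Let $T_1,\dots,T_5$ be the (distinct) 3-subsets of a 6-set $\Omega$, and for each $x\in\Omega$ set $d_x=|\{i:x\in T_i\}|$.

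First I would record the standard identity
$$\sum_{1\le i<j\le 5}|T_i\cap T_j|\;=\;\sum_{x\in\Omega}\binom{d_x}{2},$$
obtained by double-counting the pairs $\bigl(x,\{i,j\}\bigr)$ with $x\in T_i\cap T_j$.

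Second, since $\sum_{x\in\Omega}d_x=5\cdot 3=15$ is distributed among the $|\Omega|=6$ points, convexity of $d\mapsto\binom{d}{2}$ (plus a one-line check for integrality) forces the right-hand side to be minimized at the most even distribution $(3,3,3,2,2,2)$, yielding
$$\sum_{x\in\Omega}\binom{d_x}{2}\;\ge\;3\binom{3}{2}+3\binom{2}{2}\;=\;12.$$

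Finally, since the $T_i$ are distinct 3-subsets of a 6-set, each intersection $|T_i\cap T_j|$ is at most $2$, and there are only $\binom{5}{2}=10$ pairs. If fewer than two pairs met in $2$ points, the left-hand side would be bounded by $2\cdot 1+1\cdot 9=11<12$, a contradiction. Hence at least two pairs of sets in the family meet in exactly $2$ points, which implies the statement.

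The only mildly delicate point is the convexity/integrality step, namely that the minimum of $\sum_x\binom{d_x}{2}$ over non-negative integer $6$-tuples summing to $15$ is $12$. This comes down to a one-line case split on whether some $d_x\ge 4$: if so, then $\binom{d_x}{2}\ge 6$ and the remaining five terms (which sum to $\le 11$) still push the total past $12$; otherwise all $d_x\in\{0,1,2,3\}$, and a direct enumeration shows that $(3,3,3,2,2,2)$ is the unique (up to permutation) way to sum to $15$ with six such values, giving the value $12$. Everything else in the argument is routine.
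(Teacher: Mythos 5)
Your argument is correct and, unlike the paper, actually constitutes a proof: for this lemma the paper only asserts that it ``may be verified by direct calculations using coordinates'' and supplies no argument. Your double count of incidences $(x,\{i,j\})$ with $x\in T_i\cap T_j$, giving $\sum_{i<j}|T_i\cap T_j|=\sum_x\binom{d_x}{2}\ge 12$, against the observation that having at most one pair of intersection size $2$ would force that sum to be at most $2+9=11$, is clean, self-contained, and even establishes the stronger conclusion that at least two of the ten pairs meet in exactly two points. Two small remarks. First, you are right to insert the hypothesis that the five $3$-subsets are distinct; this is genuinely needed (five copies of a single triple would violate the conclusion) and is the intended reading of ``family'' here, since the lemma is applied to collections of distinct blocks. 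Second, the enumeration in your integrality step is slightly off: $(3,3,3,2,2,2)$ is not the unique multiset of six values in $\{0,1,2,3\}$ summing to $15$, since $(3,3,3,3,2,1)$ and $(3,3,3,3,3,0)$ also qualify; but these give $13$ and $15$ respectively, so the minimum is still $12$ and nothing breaks. You could sidestep the case analysis entirely by writing $\sum_x\binom{d_x}{2}=\frac12\bigl(\sum_x d_x^2-15\bigr)\ge\frac12\bigl(15^2/6-15\bigr)=11.25$ by Cauchy--Schwarz and then invoking integrality of the left-hand side to conclude it is at least $12$.
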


\begin{Lemma}
\label{specialcountfirstlemma}
Let $U\subset\lbrace 1,2,\dots, 11\rbrace$ such that $\vert U\vert =6.$
The number of blocks $B\in {\cal B}$ such that
$\vert B\cap U\vert =3$ is $20$ if $U$ is a block, it is $30$ if $U$ is not a block.
\end{Lemma}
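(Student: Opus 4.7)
The plan is to determine, for each $k\in\{0,1,\dots,6\}$, the intersection number
\[
m_k:=\#\{B\in\mathcal{B}:|B\cap U|=k\}
\]
by double counting, and to exploit the symmetry $m_k=m_{6-k}$ induced by the involution $B\leftrightarrow \bar B$ on $\mathcal{B}$ (which uses $|\bar B\cap U|=6-|B\cap U|$ because $|U|=6$).

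The main computational device is the incidence identity
\[
\sum_{k=j}^{6}\binom{k}{j}\,m_k=\binom{6}{j}\,i(j,0)\qquad(j\le 4),
\]
obtained by counting pairs $(B,T)$ with $T\subseteq B\cap U$ and $|T|=j$ in two ways, with the values $i(j,0)$ read off from Lemma~\ref{iablemma}.

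If $U$ is a block, then $m_6=1$ and $m_5=0$, since the unique block through any $5$-subset of $U$ is $U$ itself; by complementation $m_0=1$ and $m_1=0$. Substituting into the $j=2$ identity together with $m_2=m_4$ yields $m_3=40$.

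If $U$ is not a block, then $m_6=0$, and I would argue $m_5=6$ as follows: each of the six $5$-subsets of $U$ lies in a unique block, meeting $U$ in exactly five points, and these six blocks are pairwise distinct because two coincident ones would jointly contain $U$, forcing $U$ to be a block. The $j=4$ identity then gives $m_4=30$, and the $j=3$ identity gives $m_3=60$.

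Finally, since the set of blocks with $|B\cap U|=3$ is closed under complementation, $m_3$ is twice the number of unordered complementary pairs; passing to the identification of complementary pairs with the $66$ columns of the Pace code (Proposition~\ref{Paceorbitprop} and Definition~\ref{PaceM12def}) gives the stated counts $20$ and $30$. The only step with any real content is the determination $m_5=6$ in the non-block case; the rest is elementary counting on a six-element set.
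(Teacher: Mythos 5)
Your proposal is correct, and it supplies something the paper does not: the paper offers no argument for this lemma at all, merely remarking that it ``may be verified by direct calculations using coordinates,'' i.e.\ by explicit computation with the $132$ blocks. Your route --- the standard incidence identities $\sum_k\binom{k}{j}m_k=\binom{6}{j}i(j,0)$ combined with the complementation symmetry $m_k=m_{6-k}$, the determination of $m_5$ ($0$ in the block case, $6$ in the non-block case), and the final halving because exactly one block of each complementary pair avoids the letter $12$ --- is a genuine structural proof that uses only Lemma~\ref{iablemma}, and all of your intermediate values check out ($m_3=40$ for a block, $m_3=60$ for a non-block, hence $20$ and $30$ after halving). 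One small slip: in the block case the $j=2$ identity together with $m_2=m_4$ is a single equation in the two unknowns $m_3$ and $m_4$ ($7m_4+3m_3=435$), so it does not by itself yield $m_3=40$; you need one more identity from your own list, e.g.\ $j=4$ gives $\binom{6}{4}+m_4=60$, so $m_4=45$ and then $m_3=40$ (or, symmetrically with your non-block case, use $j=4$ and then $j=3$). This is an immediate repair within your stated framework, not a missing idea.
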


\begin{Lemma}
\label{k2443lemma}
Let $\Omega =\lbrace 1,2,\dots ,12\rbrace$ and $\Omega =A\cup B\cup C$
where $\vert A\vert =\vert B\vert =\vert C\vert =4$ and $P\in C.$
The number of blocks which meet each of $A,B,C$ in cardinality $2$ and avoid $P$
is at most $18.$
\end{Lemma}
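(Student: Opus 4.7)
The plan is to stratify the blocks to be counted by $X\cap C$: each such block contains exactly two elements of $C\sm\{P\}$, so we split into the three cases $\{c_i,c_j\}=X\cap C$ as $\{c_i,c_j\}$ ranges over the 2-subsets of $C\sm\{P\}=\{c_i,c_j,c_k\}$. For each such pair, Lemma \ref{iablemma} gives $i(2,2)=10$ blocks of $S(5,6,12)$ containing $\{c_i,c_j\}$ and avoiding $\{P,c_k\}$; I need to count those of the 10 with $|X\cap A|=|X\cap B|=2$, and call this number $r=r(\{c_i,c_j\})$.

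To estimate $r$ I would run a second-moment computation over the 10 blocks. From $i(3,2)=5$ one immediately has $\sum_X|X\cap A|=4\cdot 5=20$. For the quadratic moment I use the fact that the $i(4,0)=4$ blocks through a 4-set $T$ partition $\Omega\sm T$ into a perfect matching, so that the number of blocks through $T$ avoiding a disjoint 2-set $\{x,y\}$ equals $3$ when $\{x,y\}$ is one of those matching pairs (equivalently $T\cup\{x,y\}\in{\cal B}$) and $2$ otherwise. Applied with $T=\{c_i,c_j,a,a'\}$ for $\{a,a'\}\subset A$ and $\{x,y\}=\{P,c_k\}$, the exceptional case occurs exactly when $C\cup\{a,a'\}$ is a block, so writing
\[
\alpha=\#\{\{a,a'\}\subset A:C\cup\{a,a'\}\in{\cal B}\}
\]
one gets $\sum_X|X\cap A|^2=44+2\alpha$; the analogous calculation for cross-pairs $(a,b)\in A\times B$ gives $\sum_X|X\cap A|\cdot|X\cap B|=36-2\alpha$, and these combine to $\sum_X(|X\cap A|-|X\cap B|)^2=16+8\alpha$. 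Writing $n_{s,t}$ for the number of our 10 blocks of split $(s,t)=(|X\cap A|,|X\cap B|)$ and setting $p=n_{4,0}+n_{0,4}$, $q=n_{3,1}+n_{1,3}$, the equations $p+q+r=10$ and $16p+4q=16+8\alpha$ force $r=6+3p-2\alpha$. A block in our family with $|X\cap A|=4$ must equal $A\cup\{c_i,c_j\}$, so $p$ is just the number of $Y\in\{A,B\}$ such that $Y\cup\{c_i,c_j\}\in{\cal B}$.

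Summing $r$ over the three choices of $\{c_i,c_j\}$ gives
\[
N_P=18+3(\gamma_A+\gamma_B)-6\alpha,
\]
where $\gamma_A$ (resp.\ $\gamma_B$) is the number of 2-subsets of $C\sm\{P\}$ that are matching pairs of $A$ (resp.\ $B$). The main obstacle is the inequality $\gamma_A+\gamma_B\leq 2\alpha$. I would establish it by showing that all six counts \#(matching pairs of $X$ in $Y$) for distinct $X,Y\in\{A,B,C\}$ are equal to $\alpha$: parity of the perfect matching on $\Omega\sm A=B\cup C$ gives \#(matching pairs of $A$ in $B$)\,$=$\,\#(matching pairs of $A$ in $C$), while complementing blocks of type $(4,2,0)$ gives blocks of type $(0,2,4)$ and hence \#(matching pairs of $A$ in $B$)\,$=$\,\#(matching pairs of $C$ in $B$); chaining these two relations forces all six counts to coincide. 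Since $\gamma_A$ and $\gamma_B$ each count matching pairs in the 3-element subset $C\sm\{P\}$ of $C$, both are $\leq\alpha$, and thus $N_P\leq 18$.
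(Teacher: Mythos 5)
Your argument is correct, and it is genuinely different from what the paper does: the paper offers no combinatorial proof of Lemma \ref{k2443lemma} at all, merely asserting that it ``may be verified by direct calculations using coordinates,'' i.e.\ by explicit computation in a fixed model of $S(5,6,12)$. Your proof replaces that computation with a self-contained counting argument whose only inputs are the intersection numbers of Lemma \ref{iablemma}, the fact that the $i(4,0)=4$ blocks through a $4$-set induce a perfect matching on the remaining $8$ points (an immediate consequence of $i(5,0)=1$), and closure of the block set under complementation. I checked the details: the moments $\sum_X|X\cap A|=20$, $\sum_X|X\cap A|^2=44+2\alpha$, $\sum_X|X\cap A||X\cap B|=36-2\alpha$ are right (the cross term uses $2\alpha+\beta=4$ for the matching induced by $C$, which also gives the needed symmetry $m(C\text{ in }A)=m(C\text{ in }B)$, consistent with the sanity check $\sum_X(|X\cap A|+|X\cap B|)^2=160$); the resulting identity $r=6+3p-2\alpha$ and the formula $N_P=18+3(\gamma_A+\gamma_B)-6\alpha$ follow; and the closing step, showing all six counts of ``matching pairs of one part inside another'' coincide by chaining the matching-parity relation with the complementation relation, correctly yields $\gamma_A,\gamma_B\le\alpha$ and hence $N_P\le 18$. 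What your route buys is a verification independent of any coordinate model, plus structural information the computation hides — e.g.\ it shows exactly how the count depends on the parameter $\alpha\in\{0,1,2\}$ and when the bound $18$ can be attained; what the paper's route buys is only brevity.
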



\section{The parameters of the Pace code}
\label{paramsection}

\begin{Theorem}
\label{PAXtheorem}
The Pace code is a self-orthogonal $[66,10,36]_3$-code.
\end{Theorem}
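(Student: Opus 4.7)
I would verify the four assertions of the theorem (length, dimension, self-orthogonality, minimum distance) in turn. Length $66$ is the number of blocks disjoint from the letter $11$, which by Lemma~\ref{iablemma} equals $i(1,0)=66$. The dimension is $10$ by the standard fact that $Z$ is an irreducible $\ef_3 M_{12}$-module: the column span of the algebraic generator matrix of Definition~\ref{PaceM12def} is a nonzero $M_{12}$-invariant submodule of $Z$ and therefore equals $Z$. For self-orthogonality I would work with the combinatorial generator matrix $G$ of Proposition~\ref{PAXidentprop}: the entry $(GG^T)_{ij}$ counts blocks disjoint from $\lbrace 11\rbrace$ containing both $i$ and $j$, which by Lemma~\ref{iablemma} equals $i(1,1)=36$ when $i=j$ and $i(2,1)=18$ when $i\ne j$; both are divisible by $3$, so $GG^T=0$ in $\ef_3$, and self-orthogonality of the Pace code follows since monomial equivalence preserves self-orthogonality over $\ef_3$.

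The main content is the minimum distance. The upper bound $d\leq 36$ is attained by the codeword coming from $y=\chi_{\lbrace 1\rbrace}$: its entry on a block $X$ disjoint from $\lbrace 11\rbrace$ is $[1\in X]$, nonzero on exactly $i(1,1)=36$ of the $66$ columns. For the lower bound, every codeword has the form $c_y$ for some $y\colon\Omega\arr\ef_3$ with $y(11)=y(12)=0$; self-orthogonality forces every codeword weight to be divisible by $3$, so it is enough to show that for every nonzero such~$y$, writing $B_0(y)$ for the number of blocks $X$ with $11\notin X$ and $\sum_{i\in X}y(i)\equiv 0\pmod 3$, one has $B_0(y)\leq 30$.

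I would attack this by case analysis on the pair $P:=y^{-1}(1)$, $N:=y^{-1}(2)\subset\lbrace 1,\dots ,10\rbrace$. The pointwise stabilizer of $\lbrace 11,12\rbrace$ in $M_{12}$ is the sharply $3$-transitive $M_{10}$ on $\lbrace 1,\dots ,10\rbrace$, and the scalar $-1$ interchanges $P$ and $N$; consequently only finitely many canonical configurations of $(\vert P\vert ,\vert N\vert )$ and of the combinatorial type of $(P,N)$ in $S(5,6,12)$ need to be treated. For each I would partition the blocks disjoint from $\lbrace 11\rbrace$ by the intersection pattern $(\vert X\cap P\vert ,\vert X\cap N\vert )$ and sum over those patterns with $\vert X\cap P\vert\equiv\vert X\cap N\vert\pmod 3$. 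The small-support cases close with Lemma~\ref{iablemma} alone; Lemma~\ref{specialcountfirstlemma} is decisive when the support has size~$6$ (the number of blocks meeting a $6$-set in $3$ points is $20$ or $30$); Lemma~\ref{k2443lemma} caps the central $(2,2,2)$-contribution at $18$ in the case of disjoint $4$-sets $P,N$; and Lemma~\ref{specialcountthirdlemma} eliminates a residual sub-case by preventing five $3$-subsets of a single block from all contributing.

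The \textbf{main obstacle} is completing this casework exhaustively. In particular, the medium-support configurations $\vert P\vert +\vert N\vert \in\lbrace 5,6,7,8\rbrace$ are the delicate ones: several intersection patterns contribute simultaneously, and Lemmas~\ref{specialcountthirdlemma}--\ref{k2443lemma} must be combined carefully with the basic counts of Lemma~\ref{iablemma} to close the bound at exactly~$30$.
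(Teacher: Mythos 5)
Your overall strategy coincides with the paper's: pass to the combinatorial form $C_{A,B}(S(5,6,12),\ef_3)$, get self-orthogonality from $i(1,1)=36$ and $i(2,1)=18$ being multiples of $3$, and reduce the minimum-distance claim to showing that for every nonzero coefficient pattern the number of blocks $X$ (among the $66$ avoiding one fixed letter) with $\vert X\cap P\vert\equiv\vert X\cap N\vert\pmod 3$ is at most $30$ (equivalently, using divisibility of weights by $3$, strictly less than $33$). You also correctly identify where each auxiliary lemma enters: Lemma~\ref{iablemma} for the small and extreme supports, Lemma~\ref{specialcountfirstlemma} for $6$-sets, Lemma~\ref{k2443lemma} for the $(4,4,3)$ configuration, and Lemma~\ref{specialcountthirdlemma} for a residual sub-case. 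The one place you genuinely diverge is the dimension: you invoke irreducibility of the $10$-dimensional $\ef_3 M_{12}$-module $Z$ to conclude the generator matrix has rank $10$, whereas the paper gets the dimension for free from the same nullity bound $\nu(U,V)\leq 30$ (a nonzero row combination has weight $\geq 36>0$, so no nontrivial combination vanishes). Your route is legitimate but imports an unproved representation-theoretic fact; the paper's is self-contained.

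The genuine gap is that the decisive step is announced rather than carried out. The entire content of the theorem beyond bookkeeping is the exhaustive verification that $\nu(U,V)<33$ in every case, and you explicitly defer this ("the main obstacle is completing this casework"). In particular you do not produce the bounds that the paper actually needs: for $(u,v,w)=(5,5,1)$ that $k_1\leq 10$ and $k_0\leq 20$; for $(5,4,2)$ that $k_2\leq 12$, $k_1\leq 12$, $k_0\leq 8$; for $(5,3,3)$ that $k_2\leq 18$, $k_1\leq 9$, $k_0\leq 5$; and for $(4,4,3)$ that $k_0\leq 6$ and $k_1\leq 6$ (via "$X$ must contain one of the parts") before Lemma~\ref{k2443lemma} caps $k_2$ at $18$. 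Without these explicit estimates the argument is a correct reduction plus a plan, not a proof; the symmetry reduction via the stabilizer of $\lbrace 11,12\rbrace$ that you propose limits the number of configurations but does not by itself supply any of the required inequalities.
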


In the remainder of this section we prove Theorem \ref{PAXtheorem}.
We use the Pace code in the form $C=C_{A,B}(S(5,6,12),\ef_3)$
where $A=\lbrace 12\rbrace , B=\lbrace 11\rbrace ,$ see Definition \ref{combdescdef}.
The length is $n=i(0,1)=66,$ the designed dimension is $k=10.$
Let ${\cal B}$ be the blocks of $S(5,6,12)$ not containing $12.$ Observe that the columns
of $G$ are the characteristic functions of $X\sm\lbrace 11\rbrace$ where $X\in {\cal B}.$
Let $r_i, 1\leq i\leq 10$ be the rows of the generator matrix of Definition \ref{combdescdef}.
The codewords of $C$ have the form $\sum_{i\in U}r_i-\sum_{j\in V}r_j,$ where $U,V$ are
disjoint subsets of $\lbrace 1,\dots ,10\rbrace .$
The number of zeroes of this codeword is
the  {\bf nullity} $\nu (U,V),$  the number of blocks $X\in {\cal B}$ satisfying the condition that
$\vert X\cap U\vert$ and $\vert X\cap V\vert$ have the same congruence mod $3.$
Let $c\in\lbrace 0,1,2\rbrace$ be this congruence.
We need to show that $\nu (U,V)\leq 30$ for all $(U,V)$ except when $U=V=\emptyset .$
This will prove the claim that the nonzero weights are $\geq 36$ and also that
the dimension is $10.$
\par
Let $u=\vert U\vert , v=\vert V\vert ,$ let $W$ be the complement of $U\cup V$ in
$\lbrace 1,\dots ,11\rbrace , w=\vert W\vert .$ Observe $u+v+w=11, w>0.$
We have $\nu (U,V)=\sum_ck_c(u,v,w),$ where $k_c(u,v,w)$ is the number of $X\in {\cal B}$
meeting each of $U,V,W$ in a cardinality congruent to $c$ mod 3.
Observe that $k_c(u,v,w)$ is symmetric in its arguments
as long as the side condition $w>0$ is satisfied.
The weight of $r_i$ is $i(1,1)=36$ (this is case $u=1,v=0$).
In particular $r_i\cdot r_i=0.$ Also $r_i\cdot r_j=0$ for $i\not=j$ as $i(2,1)=18$ is a multiple of $3.$
It follows that $C$ is self-orthogonal. All codeword weights and nullities are therefore multiples of $3.$
 \par
It may be verified that $\nu (U,V)<33$ in a case by case analysis, starting from large values of $u.$
If $u=10$ then $v=0,w=1$ and $\nu (10,0)=k_0(10,0,1)=i(0,2)=30.$
Cases $u\in\lbrace 6,7,8,9\rbrace$are similar.

Let $u=5.$ By symmetry it can be assumed $3\leq v\leq 5.$
In case $v=5$ we have $k_1(5,5,1)\leq 10, k_0(5,5,1)\leq 20,$
and in case $v=4$ we have $k_2(5,4,2)\leq 12, k_1(5,4,2)\leq 2+10=12, k_0(5,4,2)\leq 8,$ hence
$\nu (5,4)<33.$
As $k_2(5,3,3)\leq 18, k_1(5,3,3)\leq 9$ and $k_0(5,3,3)\leq 1+2\times 2$ we have $\nu (5,3)<33.$
The final case to consider is $(u,v,w)=(4,4,3).$
In case $c=0$ we have that $X$ meets two of the subsets $U,V,W$ in cardinality $3.$
If $W\subset X,$ there are at most two such blocks.
There are at most four blocks meeting each of $U,V$ in cardinality $3.$ It follows $k_0(4,4,3)\leq 6.$
If $c=1,$ then either $U\subset X$ or $V\subset X.$ It follows $k_1(4,4,3)\leq 6.$
The most difficult case is $c=2.$ Lemma \ref{k2443lemma} states $k_2(4,4,3)\leq 18.$
We are done.


\begin{thebibliography}{99}
 \bibitem{Bbook} J.~Bierbrauer:
  {\it Introduction to Coding Theory,}
  Chapman and Hall/ CRC Press 2004.
  \bibitem{BMP} J.~Bierbrauer, S. Marcugini, F. Pambianco: {\it Some codes with large automorphism groups}, preprint.
 \bibitem{Pace} N.~Pace:
  {\it New ternary linear codes from projectivity groups,} \\
  {\sl Discrete Mathematics \bf331} (2014), 22-26.


\end{thebibliography}
\end{document}